\numberwithin{equation}{section}
\theoremstyle{plain}
\newtheorem{theorem}{Theorem}
\theoremstyle{plain}
\newtheorem{lemma}{Lemma}
\newtheorem{corollary}{Corollary}
\theoremstyle{definition}
\newtheorem{proof}{Proof}
\newcommand{\mlegendre}[2]{\left(\frac{#1}{#2}\right)}
\begin{document}

\title{Quadratic characters with positive partial sums}
\author{A.\,B.~Kalmynin}
\address{Steklov Mathematical Institute of Russian Academy of Sciences, Moscow, Russia
}
\email{alkalb1995cd@mail.ru}
\date{}
\udk{}
\maketitle
\footnotetext{The work is supported by the Russian Science Foundation under grant \textnumero 19-11-0001.}
\begin{abstract}\begin{bf}{Abstract.}\end{bf}
Let $\mathcal L^+$ be the set of all primes $p$ for which the sums of $\mlegendre{n}{p}$ over the interval $[1,N]$ are non-negative for all $N$. We prove that the estimate
\[
|\mathcal L^+\cap [1,x]|\ll \frac{x}{\ln x(\ln\ln x)^{c-o(1)}}
\]
holds for $c\approx 0.0368$
\end{abstract}
\section{Introduction}

Let $p$ be a prime number. The only quadratic character modulo $p$ is the Legendre symbol $\chi_p(n)=\mlegendre{n}{p}$ and it is well-known that sums of this character over certain intervals, such as $[0,p/2]$ or $[0,p/3]$ are always non-negative. Results of this type are closely related to the behavior of the values of $L$-functions of Dirichlet characters $L(s,\chi)$ at $s=1$. Conversely, positivity of the partial sums
\[
S_{\chi_p}(t)=\sum_{n\leq t}\chi_p(n)
\]
can, in some situations, lead to results on the distribution of zeros of $L$-functions. More precisely, let
\[
f_p(t)=\sum_{n<p}\mlegendre{n}{p}t^n
\]
be the Fekete polynomial. In view of the fact that
\[
L(s,\chi_p)\Gamma(s)=\int_0^{+\infty}\frac{f_p(e^{-t})}{1-e^{-pt}}t^{s-1}dt
\]
for all $\mathrm{Re}\,s>0$, to prove that $L(s,\chi_p)$ has no positive real zeros (which would, for example, imply non-existence of Siegel zeros for $L(s,\chi_p)$) it suffices to show that $f_p(t)$ has no zeros in $(0,1)$. One particularly interesting set of primes, for which $f_p(t)$ indeed has no zeros in $(0,1)$ is primes with $S_{\chi_p}(t)\geq 0$ for all $t$, i.e. primes with non-negative partial sums of $\chi_p(n)$. Following \cite{BCC}, we will denote this set by  $\mathcal L^+$. However, the result by Baker and Montgomery \cite{BM} shows that for most primes $p$ the Fekete polynomial $f_p(t)$ has a lot of zeros between $0$ and $1$. In particular, $\mathcal L^+$ has relative density $0$. Authors do not provide any explicit bound for the rate of decay of relative density, so their results imply the following:
\begin{theorem}
For $x\to +\infty$ we have
\[
|\mathcal L^+\cap [1,x]|=o(\pi(x)).
\]
\end{theorem}

In this paper, we are going to use some more recent theorems on random multiplicative functions to obtain the explicit decay rate and prove the following quantitative estimate.

\begin{theorem}
There is a constant $c>0$ such that
\[
|\mathcal L^+\cap [1,x]|\ll \frac{\pi(x)}{(\ln\ln x)^c}.
\]
One can take $c\approx 0.0368$
\end{theorem}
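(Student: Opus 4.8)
The strategy is to reduce, by quadratic reciprocity and the Brun--Titchmarsh inequality, to a statement about a Rademacher random multiplicative function, and then to prove that statement by a multiscale conditioning argument; essentially all the difficulty is concentrated in the random-function estimate and in extracting the explicit constant.

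For the reduction, fix $T$ with $T=o(\log x)$ and $\log T=(1+o(1))\log\log x$, for instance $T=\lfloor \log x/(\log\log x)^{2}\rfloor$, and set $Q=8\prod_{3\le q\le T}q$, so that $\log Q\sim T$ by the prime number theorem and hence $Q=x^{o(1)}$. By quadratic reciprocity and its supplement at $q=2$, each $\chi_p(q)$ with $q\le T$ depends only on $p\bmod Q$, and $a\mapsto(\chi_a(q))_{q\le T}$ defines a surjective homomorphism $(\mathbb{Z}/Q\mathbb{Z})^{\times}\to\{\pm1\}^{\pi(T)}$, so each of its fibres has size $\phi(Q)/2^{\pi(T)}$. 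Since $\chi_p$ is completely multiplicative and $\chi_p(q)^{2}=1$ for $q\ne p$, the value $\chi_p(n)$ for every $n\le T<p$ is determined by the tuple $(\chi_p(q))_{q\le T}$; hence if $p\in\mathcal{L}^{+}$ that tuple must lie in the set $G(T)\subseteq\{\pm1\}^{\pi(T)}$ of sign patterns whose completely multiplicative extension has non-negative partial sums on $[1,T]$. Applying Brun--Titchmarsh to each of the $|G(T)|\,\phi(Q)/2^{\pi(T)}$ residue classes $p\bmod Q$ that carry such a pattern, summing, and using $\log(x/Q)\sim\log x$, one gets
\[
|\mathcal{L}^{+}\cap[1,x]|\ \ll\ \frac{|G(T)|}{2^{\pi(T)}}\,\pi(x)\ =\ \mathbf{P}\!\left[\sum_{n\le t}f(n)\ge 0\ \text{for all integers } t\le T\right]\pi(x),
\]
where $f$ is a Rademacher random multiplicative function. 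Crucially this needs only $Q<x^{1-\varepsilon}$, so $T$ may be taken as large as a fixed power of $\log x$; that is exactly what turns the $\ln\ln\ln x$ of Theorem~1 into $\ln\ln x$.

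It remains to prove $\mathbf{P}[\sum_{n\le t}f(n)\ge 0\ \forall\,t\le T]\ll(\log T)^{-c}$ with $c\approx0.0368$, which together with $\log T=(1+o(1))\log\log x$ yields the theorem. Here I would exploit the modern picture of the partial sums of $f$ in terms of (critical) multiplicative chaos: grouping primes according to the size of $\log\log q$, the process $N\mapsto\sum_{n\le N}f(n)/\sqrt N$ has of order $\log\log N$ essentially independent scales. Accordingly, partition the primes into blocks $\mathcal{B}_{1},\dots,\mathcal{B}_{I}$ on which $\log\log q$ runs through consecutive intervals of a fixed width $w$, so $I=(1+o(1))w^{-1}\log\log T$; let $\mathcal{F}_{i}$ be generated by $f$ on $\mathcal{B}_{1}\cup\dots\cup\mathcal{B}_{i}$ and $y_{i}=\max\mathcal{B}_{i}$. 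On the event that $|\sum_{n\le y_{i-1}}f(n)|$ is not abnormally large — whose complement, once one discards an initial segment of only $o(I)$ blocks, has total probability $\ll(\log T)^{-1}$ by the second-moment bound $\mathbf{E}\bigl(\sum_{n\le N}f(n)\bigr)^{2}\asymp N\log N$ — the recent lower bounds for the probability that $\sum_{n\le t}f(n)$ makes a negative excursion inside a single scale (small-ball/anti-concentration estimates for these sums) show that, conditionally on $\mathcal{F}_{i-1}$, the partial sum remains $\ge 0$ throughout $\mathcal{B}_{i}$ with probability at most some $\rho(w)<1$. Multiplying these conditional bounds over the blocks gives $\rho(w)^{(1+o(1))I}=(\log T)^{-(1+o(1))w^{-1}|\log\rho(w)|}$, and optimizing over $w$ produces the exponent $c=\max_{w>0}\bigl(-w^{-1}\log\rho(w)\bigr)\approx0.0368$.

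The main obstacle is this last step, and within it the analytic input about $f$: one needs a lower bound, uniform in the conditioning, for the probability that $\sum_{n\le t}f(n)$ dips below zero within one $\log\log$-block — a ballot-type statement for random multiplicative partial sums — and, to get the numerical value of $c$, an explicit enough description of the limiting law of $\sum_{n\le N}f(n)/\sqrt N$ (via the associated critical chaos) to evaluate the optimal one-block probability $\rho(w)$. By comparison, the first two paragraphs are routine, as is the bookkeeping for the discarded initial blocks, which costs only a factor $1+o(1)$.
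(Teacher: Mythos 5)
Your first paragraph (the reduction via quadratic reciprocity and Brun--Titchmarsh to the probability that a Rademacher random multiplicative function $f$ has non-negative partial sums up to $T$, with $\log T=(1+o(1))\log\log x$) is correct and is essentially the paper's Theorem 3; the paper simply takes $T=0.5\ln x$, which also works since $Q=e^{(1+o(1))T}\ll x^{3/5}$ still leaves $\log(x/Q)\asymp\log x$ in Brun--Titchmarsh.

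The genuine gap is in the second half, where all the difficulty lives. You reduce the problem to a ballot-type claim: conditionally on the past, the partial sum makes a negative excursion within each $\log\log$-block with probability at least $1-\rho(w)$, uniformly in the conditioning. You neither state this precisely nor prove it, and no such conditional, uniform anti-concentration estimate for $\sum_{n\le t}f(n)$ is available off the shelf; the ``essentially independent scales'' picture is a heuristic whose rigorous implementation is nontrivial precisely because the partial sums over a block depend on $f(p)$ for \emph{all} smaller primes, not just those in the block. Moreover, the claimed value $c=\max_w\bigl(-w^{-1}\log\rho(w)\bigr)\approx0.0368$ cannot be extracted from your scheme, since you have no formula for $\rho(w)$. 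In the paper the constant arises from a different mechanism: positivity of all partial sums up to $N$ forces $\sup_{|t|\le\frac13(\ln\ln N)^3-1}|\zeta_f(\sigma+it)|\le(\ln\ln N)^3(\zeta_f(\sigma)+I(N))$ at $\sigma=\frac12+\frac{3\ln\ln N}{\ln N}$; one then bounds $\zeta_f(\sigma)$ above via the sub-Gaussian tail of $\sum_p f(p)p^{-\sigma}$ (cost $(\ln N)^{-(\alpha-1/2)^2/2+o(1)}$), bounds $\mathbb E I(N)\ll(\ln N)^{-3/2}$, and plays this against Harper's lower bound for $\sup_t\sum_p\cos(t\ln p)p^{-\sigma}$; optimizing $\min\bigl((\alpha-1/2)^2/2,\,3/2-\alpha-\sqrt{5/4-\alpha}\bigr)$ over $\alpha$ gives $c\approx0.0368$ at $\alpha\approx0.771$. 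As written, your proposal establishes only the (correct) reduction, not the required bound on the probability itself, and the numerical constant in the statement is tied to the zeta-function argument you have replaced.
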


Borwein, Choi and Coons study the set $\mathcal L^+$ in context of multiplicative functions $\widetilde{\chi_p}$, that are defined as completely multiplicative functions with $\widetilde{\chi_p}(n)=\chi_p(n)$ for $(n,p)=1$ and $\widetilde{\chi_p}(p)=1$. These functions take values in $\{1,-1\}$ and for $p\in \mathcal L^+$ have non-negative partial sums that go to infinity at a logarithmic rate. In view of results of T. Tao on Erd\H{o}s discrepancy problem \cite{Tao}, this makes these functions the most positively biased among multiplicative ``almost counterexamples'' to this conjecture.

\section{Multiplicative Dyck paths}

One of the most natural approaches to our problem of estimating density of $\mathcal L^+$ is to reduce it to some finite combinatorial problem. Our reduction starts with the observation that if $p\in \mathcal L^+$ then the vectors $(1,\mlegendre{n}{p})$ for $n=1,\ldots,p-1$ form a Dyck path of length $p-1$. In other words, these vectors form a lattice path on $\mathbb Z^2$ with each step going either up and right or down and right, which never goes below the line $y=0$ and ends on the aforementioned line. The number of Dyck paths of length $2N$ is counted by Catalan numbers and the probability that a random path of length $2N$ is a Dyck path converges to zero at a rate $\frac{c}{N^{3/2}}$. 

Of course, the length of Dyck paths in question varies. On the other hand, we can consider \emph{incomplete} Dyck paths that do not necessarily end on the line $y=0$, i.e. can have a nonzero total sum. The proportion of incomplete Dyck paths goes to zero slightly slower: at a rate of $O\left(\frac{1}{\sqrt N}\right)$. Incomplete Dyck paths are also called Dyck prefixes and the number of Dyck prefixes of length $N$ is equal to ${N \choose [N/2]}$, see \cite[p. 448, Prop. 9.1.2 and Prop. 9.1.3]{Lw}.

 Moreover, throughout our analysis we missed one important condition which is true for our paths, namely, multiplicativity: for any $a$ and $b$ with $ab\leq p-1$ the $ab$-th step of our path is equal to a pointwise product of $a$-th step and $b$-th step. Let us call such paths \emph{multiplicative paths}. Then the main result that will help us prove Theorem 2 is
\begin{theorem}
Let $m(N)$ be the proportion of incomplete Dyck paths among all multiplicative paths of length $[N]$. Then for $x\geq 8$ we have
\[
|\mathcal L^+\cap [1,x]|\ll \pi(x)m(0.5\ln x).
\]
\end{theorem}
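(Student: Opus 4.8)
The plan is to turn membership in $\mathcal L^+$ into a constraint on the initial segment of $\widetilde{\chi_p}$ and then count primes with a prescribed such segment using quadratic reciprocity together with Brun--Titchmarsh. Fix $x\geq 8$ and set $L=[0.5\ln x]$. A multiplicative path of length $L$ is exactly a completely multiplicative sequence $(\varepsilon_1,\dots,\varepsilon_L)$ with values in $\{-1,1\}$, and such a sequence is determined by the signs $\varepsilon_q\in\{-1,1\}$ on the primes $q\leq L$; hence there are precisely $2^{\pi(L)}$ multiplicative paths of length $L$, and the number of those that are incomplete Dyck paths is $D(L):=m(L)2^{\pi(L)}$. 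If $p\in\mathcal L^+$ and $p>L$, then $(n,p)=1$ for $n\leq L$, so $\widetilde{\chi_p}(n)=\mlegendre{n}{p}$, and $S_{\chi_p}(t)=\sum_{n\leq t}\widetilde{\chi_p}(n)\geq 0$ for $t\leq L<p$; thus the segment $(\widetilde{\chi_p}(1),\dots,\widetilde{\chi_p}(L))$ is an incomplete Dyck path which is multiplicative. This gives a map $\Phi$ from $\mathcal L^+\cap(L,x]$ to the set of incomplete Dyck multiplicative paths of length $L$, and it suffices to bound the fibres of $\Phi$.

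Next I would estimate, for a fixed sign pattern $(\varepsilon_q)_{q\leq L}$, the number of primes $p\leq x$ with $\Phi(p)=(\varepsilon_q)_{q\leq L}$. The condition $\mlegendre{2}{p}=\varepsilon_2$ confines $p$ to $2$ of the $4$ reduced residue classes modulo $8$, each of which fixes $p\bmod 4$; and for an odd prime $q\leq L$, quadratic reciprocity rewrites $\mlegendre{q}{p}=\varepsilon_q$, once $p\bmod 4$ is fixed, as the requirement that $p$ lie in a specific set of $(q-1)/2$ residue classes modulo $q$. By the Chinese remainder theorem the admissible $p$ form a union of $K=2\prod_{q\leq L,\ q\text{ odd}}\frac{q-1}{2}$ residue classes modulo $M=8\prod_{q\leq L,\ q\text{ odd}}q$. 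Since $\log M=\log 8+\theta(L)=(1+o(1))\cdot 0.5\ln x$, we have $M\leq x^{1/2+o(1)}$, so $\ln(x/M)\gg\ln x$ and Brun--Titchmarsh yields $\pi(x;M,a)\ll \frac{x}{\phi(M)\ln(x/M)}\ll\frac{\pi(x)}{\phi(M)}$ for each class. Summing over the $K$ classes and using $\phi(M)=4\prod_{q\leq L,\ q\text{ odd}}(q-1)$, so that $K/\phi(M)=2^{-\pi(L)}$, each fibre of $\Phi$ has size $\ll \pi(x)/2^{\pi(L)}$.

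Finally, summing this fibre bound over the $D(L)$ incomplete Dyck multiplicative paths of length $L$ gives
\[
|\mathcal L^+\cap(L,x]|\ll D(L)\cdot\frac{\pi(x)}{2^{\pi(L)}}=\pi(x)m(L).
\]
Adding back the at most $\pi(L)\leq L\ll\ln x$ primes $p\leq L$ changes nothing, since $\pi(x)m(L)\gg\pi(x)2^{-\pi(L)}\gg x^{1/2}$ because $2^{\pi(L)}\leq 2^{L}\leq x^{(\ln 2)/2}$; the finitely many $x$ in a fixed bounded range are handled trivially. Hence $|\mathcal L^+\cap[1,x]|\ll\pi(x)m([0.5\ln x])$, which is the assertion.

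The main technical point, and the only place where the particular truncation $L=[0.5\ln x]$ is used, is keeping the modulus $M$ safely below $x$ so that Brun--Titchmarsh saves a full factor of $\ln x$: by Chebyshev's bound $\theta(L)\leq(\ln 4)L$, any $L=\alpha\ln x$ with $\alpha<1$ works, and $\alpha=0.5$ is a convenient choice giving $M\leq x^{1/2+o(1)}$. The reciprocity bookkeeping — the role of $p\bmod 8$, the sign $(-1)^{\frac{p-1}{2}\frac{q-1}{2}}$, and the fact that the count of residue classes is exactly $2^{-\pi(L)}\phi(M)$ — must be carried out carefully, but it only influences the implied constant.
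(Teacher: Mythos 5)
Your proposal is correct and follows essentially the same route as the paper: map each $p\in\mathcal L^+$ with $p>0.5\ln x$ to the incomplete multiplicative Dyck path formed by $(\mlegendre{n}{p})_{n\leq 0.5\ln x}$, use quadratic reciprocity to confine $p$ to $2^{-\pi(L)}\varphi(M)$ residue classes modulo a modulus of size $x^{1/2+o(1)}$, and apply Brun--Titchmarsh to each class before summing over the $m(L)2^{\pi(L)}$ admissible paths. Your bookkeeping via $p\bmod 8$ is marginally tidier than the paper's (which fixes only $p\bmod 4$ and drops the condition at $q=2$), but this affects only the implied constant.
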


To prove this result we will need the law of quadratic reciprocity and the Brun-Titchmarsh inequality.
\begin{lemma}
For odd primes $p$ and $q$ we have
\[
\mlegendre{p}{q}\mlegendre{q}{p}=(-1)^{(p-1)(q-1)/4}
\]
\end{lemma}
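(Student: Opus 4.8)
The statement is Gauss's law of quadratic reciprocity together with its first supplementary law, so I would reach for one of the classical arguments; the one I would actually carry out is the proof via the quadratic Gauss sum. First I would fix a primitive $p$-th root of unity $\zeta=e^{2\pi i/p}$ and form $\tau=\sum_{a=1}^{p-1}\mlegendre{a}{p}\zeta^{a}$, regarded as an element of $\mathbb Z[\zeta]$. The first key step is the evaluation $\tau^{2}=p^{*}$, where $p^{*}=(-1)^{(p-1)/2}p$: writing $\tau^{2}=\sum_{a,b}\mlegendre{ab}{p}\zeta^{a+b}$, substituting $b\equiv ac\pmod p$ and summing the resulting geometric series in $a$ isolates the contribution of $c\equiv -1$, giving $\tau^{2}=\mlegendre{-1}{p}p$, and $\mlegendre{-1}{p}=(-1)^{(p-1)/2}$ is Euler's criterion.

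Next I would pass to the quotient $\mathbb Z[\zeta]/q\mathbb Z[\zeta]$ and compute $\tau^{q}$ two ways. On one hand, since $q$ is an odd prime and every cross term in the multinomial expansion carries a factor $q$, $\tau^{q}\equiv\sum_a\mlegendre{a}{p}^{q}\zeta^{aq}=\sum_a\mlegendre{a}{p}\zeta^{aq}\pmod q$; re-indexing by $b\equiv aq\pmod p$ and using multiplicativity of the symbol gives $\tau^{q}\equiv\mlegendre{q}{p}\tau$. On the other hand, $\tau^{q}=\tau\cdot(\tau^{2})^{(q-1)/2}=\tau\,(p^{*})^{(q-1)/2}\equiv\mlegendre{p^{*}}{q}\tau$, again by Euler's criterion. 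Hence $\mlegendre{q}{p}\tau\equiv\mlegendre{p^{*}}{q}\tau\pmod q$; multiplying through by $\tau$ and cancelling the factor $p^{*}$, which is a unit mod $q$ since $p\neq q$, yields $\mlegendre{q}{p}\equiv\mlegendre{p^{*}}{q}\pmod q$, and as both sides are $\pm1$ and $q>2$ they are equal. Finally I would unwind $\mlegendre{p^{*}}{q}=\mlegendre{-1}{q}^{(p-1)/2}\mlegendre{p}{q}=(-1)^{\frac{p-1}{2}\cdot\frac{q-1}{2}}\mlegendre{p}{q}$, using Euler's criterion for $\mlegendre{-1}{q}$, which rearranges into the claimed identity $\mlegendre{p}{q}\mlegendre{q}{p}=(-1)^{(p-1)(q-1)/4}$.

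The only genuinely delicate point is the evaluation $\tau^{2}=p^{*}$ and, relatedly, checking that the congruences in $\mathbb Z[\zeta]$ are legitimate — that reduction mod $q$ behaves well and that $p^{*}$ is invertible there; everything else is bookkeeping with Euler's criterion. An alternative that avoids cyclotomic integers is Eisenstein's lattice-point proof: Gauss's lemma expresses $\mlegendre{q}{p}$ as $(-1)^{N}$ with $N\equiv\sum_{k=1}^{(p-1)/2}\lfloor qk/p\rfloor\pmod 2$, and the symmetric identity $\sum_{k=1}^{(p-1)/2}\lfloor qk/p\rfloor+\sum_{j=1}^{(q-1)/2}\lfloor pj/q\rfloor=\frac{p-1}{2}\cdot\frac{q-1}{2}$ (count lattice points on either side of the diagonal of a rectangle) closes the argument; there the main obstacle shifts to proving Gauss's lemma and reducing its exponent modulo $2$ for odd $q$. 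Either way the proof is short, and I would include it for completeness since the rest of the paper uses reciprocity as a black box.
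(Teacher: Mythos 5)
Your proof is correct. The paper itself offers no proof of this lemma at all: it is the classical law of quadratic reciprocity, stated as Lemma 1 and used as a black box in the proof of Theorem 3 (the only place it enters is to convert the conditions $\mlegendre{q}{p}=\mu_q$ into congruence conditions on $p$ modulo small primes $q$). Your Gauss-sum argument is one of the standard complete proofs and all the steps check out: $\tau^2=p^*$ with $p^*=(-1)^{(p-1)/2}p$, the two evaluations of $\tau^q$ modulo $q$ via the freshman's dream and via Euler's criterion, cancellation of the unit $p^*$, and the unwinding of $\mlegendre{p^*}{q}$. One small point worth making explicit: the identity as stated presupposes $p\neq q$ (for $p=q$ the left side vanishes), and this is exactly the hypothesis you need when you invert $p^*$ modulo $q$; in the paper's application $q$ ranges over primes up to $0.5\ln x$ while $p\leq x$ is assumed larger, so distinctness holds there. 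Including either your cyclotomic proof or the Eisenstein lattice-point count would be a matter of taste; the paper's choice to cite the result without proof is also defensible given how classical it is.
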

\begin{lemma}
If $q<x$, $(q,a)=1$ and $\pi(x;q,a)$ is the number of primes $p\leq x$ with $p\equiv a \pmod q$ then
\[
\pi(x;q,a)\leq \frac{2x}{\varphi(q)\ln(x/q)}.
\]
\end{lemma}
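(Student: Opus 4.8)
The plan is to derive this inequality --- the Brun--Titchmarsh theorem in the sharp form of Montgomery and Vaughan --- from the large sieve. First I would convert the problem into a sifting problem on a short interval. Writing each $n\equiv a\pmod q$ as $n=a+qm$, a prime $p\equiv a\pmod q$ with $Q<p\leq x$ corresponds to an integer $m$ lying in an interval $\mathcal I$ of length $N:=(x-a)/q<x/q$. Since such a $p$ is prime and exceeds $Q$, it is divisible by no prime $\ell\leq Q$; for $\ell\nmid q$ this translates into the condition that $m$ avoid the single residue class $m\equiv -a\overline{q}\pmod\ell$, where $\overline q$ is the inverse of $q$ modulo $\ell$ (here we use $(a,q)=1$). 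Thus it suffices to bound the number $S$ of integers of $\mathcal I$ that avoid one prescribed class modulo every prime $\ell\leq Q$ with $\ell\nmid q$, since $\pi(x;q,a)\leq \pi(Q)+S\leq Q+S$.

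Next I would apply the large sieve. From the analytic inequality
\[
\sum_{r}\Bigl|\sum_{n}a_n e^{2\pi i\alpha_r n}\Bigr|^2\leq (N+Q^2)\sum_{n}|a_n|^2,
\]
valid for the $Q^{-2}$-spaced Farey points $\alpha_r=b/k$ with $(b,k)=1$, $k\leq Q$, the standard duality argument yields the combinatorial bound
\[
S\leq \frac{N+Q^2}{L(Q)},\qquad L(Q)=\sum_{k\leq Q}\mu^2(k)\prod_{\ell\mid k}\frac{\omega(\ell)}{\ell-\omega(\ell)},
\]
where $\omega(\ell)$ is the number of classes removed modulo $\ell$. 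In our case $\omega(\ell)=1$ for $\ell\nmid q$, and simply discarding the terms with $(k,q)>1$ gives $L(Q)\geq\sum_{k\leq Q,\ (k,q)=1}\mu^2(k)/\varphi(k)$.

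It then remains to estimate this sum and to choose $Q$. Writing each squarefree $k$ as a product of a part supported on the primes dividing $q$ and a coprime part, one gets $\sum_{k\leq Q}\mu^2(k)/\varphi(k)\leq\frac{q}{\varphi(q)}\sum_{k\leq Q,\ (k,q)=1}\mu^2(k)/\varphi(k)$, and the classical lower bound $\sum_{k\leq Q}\mu^2(k)/\varphi(k)\geq\sum_{n\leq Q}1/n\geq\ln Q$ then gives $L(Q)\geq\frac{\varphi(q)}{q}\ln Q$. Choosing $Q$ just below $\sqrt{x/q}$, for instance $Q=\sqrt{x/q}\,/\sqrt{\ln(x/q)}$, makes the error term $Q^2=o(N)$ while keeping $\ln Q=(\tfrac12+o(1))\ln(x/q)$, so that
\[
\pi(x;q,a)\leq Q+\frac{q}{\varphi(q)}\cdot\frac{N+Q^2}{\ln Q}=(2+o(1))\frac{x}{\varphi(q)\ln(x/q)}.
\]
The main obstacle is sharpening this $2+o(1)$ to the clean constant $2$ with no error term and uniformly for all $q<x$: the surplus $Q^2$, the $Q$ discarded small primes, and the slack in $\sum\mu^2(k)/\varphi(k)\geq\ln Q$ each contribute lower-order losses that must be absorbed. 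This is exactly where I would invoke the sharp large sieve inequality of Montgomery and Vaughan together with their careful optimization, which removes these losses and establishes the inequality in the stated form.
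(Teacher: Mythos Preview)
The paper does not prove this lemma at all: it simply states the Brun--Titchmarsh inequality as a known input and moves on to the proof of Theorem~3. Your sketch via the large sieve is the standard route and is correct as far as it goes; since you ultimately defer to Montgomery--Vaughan for the sharp constant~$2$, your treatment and the paper's are effectively the same --- both quote the result rather than prove it in full.
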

\begin{proof}
See \cite{MonVau}, p. 90, Theorem 3.9.
\end{proof}

\begin{proof}[of Theorem 3]
Note first that any prime $p\in \mathcal L^+$ with $p>0.5\ln x$ gives us an incomplete multiplicative Dyck path of length $[0.5\ln x]$, namely the path
\[
\ell_p=\left(1,\mlegendre{m}{p}\right)_{m\leq 0.5\ln x}.
\]
Let $M$ be the total number of incomplete multiplicative Dyck paths of this length. Let us show that for any such path $\mu$ there are at most
\[
O\left(\frac{\pi(x)}{2^{\pi(0.5\ln x)}}\right)
\]
primes $p\leq x$ such that $\ell_p=\mu$. Indeed, if $\ell_p=\mu$ then $\mlegendre{q}{p}=\mu_q$ for all primes $q\leq 0.5\ln x$. Let
\[
Q=\prod_{2<q\leq 0.5\ln x}q.
\]
Lemma 1 implies that if $\varepsilon=\pm 1$, $p\equiv \varepsilon \pmod 4$ and $\mlegendre{q}{p}=\mu_q$ then $\mlegendre{p}{q}=\mu_q\varepsilon^{(q-1)/2}$, so for fixed $\varepsilon$ the prime $p$ with $\ell_p=\mu$ lies in one of $\prod_{2<q\leq 0.5\ln x}\frac{q-1}{2}=\varphi(Q)2^{1-\pi(0.5\ln x)}$ residue classes $\mod Q$. Now, by the Prime Number Theorem, we have
\[
Q\ll \exp(0.6\ln x)=x^{3/5}.
\]
Applying the Brun-Titchmarsh inequality to each of the abovementioned residue classes for each choice of $\varepsilon$ and $\mu$, we obtain
\[
|\mathcal L^+\cap [0,x]|\ll \ln x+M\varphi(Q)2^{-\pi(0.5\ln x)}\frac{x}{\varphi(Q)\ln(x/Q)}\ll \frac{M}{2^{\pi(0.5\ln x)}}\pi(x)=\pi(x)m(0.5\ln x),
\]
as required, because there are $2^{\pi(0.5\ln x)}$ multiplicative paths of length $[0.5\ln x]$ and only $M$ of them are incomplete Dyck paths. Also, the $\ln x$ term is absorbed into the second estimate, as $M\geq 1$: there is a multiplicative path that consists of the vectors $(1,1)$.
\end{proof}

One can also notice that the quantity $m(N)$ can be interpreted in terms of random multiplicative functions. More precisely, let
\[
f(2),f(3),\ldots,f(p),\ldots
\]
be the sequence of independent Rademacher random variables indexed by prime numbers. The variable $X$ has a Rademacher distribution if $\mathbb P(X=1)=\mathbb P(X=-1)=\frac12.$ Now we define the random multiplicative function $f(n)$ by the formula
\[
f(n)=\prod_p X_p^{\nu_p(n)},
\]
where $\nu_p(n)$ is the largest $\nu$ with $p^\nu \mid n$. Then it is easy to observe that $m(N)$ is just a probability of the event
\[
X_1\geq 0,
\]
\[
X_1+X_2\geq 0,
\]
\[
\ldots
\]
\[
X_1+\ldots+X_N\geq 0,
\]
because for fixed $X_n$ the probability of the event $f(n)=X_n$ for all $n\leq N$ is equal to $2^{-\pi(N)}$ and to get the total probability we should sum this quantity over all incomplete multiplicative Dyck paths of length $N$.
\section{Proof of Theorem 2}

To obtain our main result, we are going to establish the upper bound
\[
m(N)\ll \frac{1}{(\ln N)^{c+o(1)}}
\]
for all large $N$ in this section. To do so, we will consider the random zeta-function associated to the random multiplicative function $f(n)$ 
\[
\zeta_f(s)=\sum_{n\geq 1}\frac{f(n)}{n^s}
\]
near the line $\mathrm{Re}\,s=\frac12$, which is the abscissa of convergence of this Dirichlet series for almost all $f$. The fact that this series converges for $\mathrm{Re}\,s>\frac12$ for almost all $f$ can be easily deduced from the fact that
\[
M_f(x)=\sum_{n\leq x}f(n)=O(x^{1/2+o(1)})
\]
for almost all $f$ (see \cite{Hal}). Next, we set $\sigma=\sigma_N=1+\frac{3\ln\ln N}{\ln N}$. Using summation by parts we see that for any $t\in \mathbb R$ we have
\[
\zeta_f(\sigma+it)=(\sigma+it)\int_1^{+\infty}\frac{M_f(u)}{u^{\sigma+1+it}}du
\]
If $M_f(u)\geq 0$ for all $u\leq N$ (i.e. inside the event, probability of which we would like to estimate), then
\[
|\zeta_f(\sigma+it)|\leq (1+|t|)\int_1^{N}\frac{M_f(u)}{u^{\sigma+1}}du+(1+|t|)\int_{N}^{+\infty}\frac{|M_f(u)|}{u^{\sigma+1}}du.
\]
Let us denote the last integral by $I=I(N)$ and apply our identity once more to obtain
\[
\zeta_f(\sigma)=\sigma\int_1^{+\infty}\frac{M_f(u)}{u^{\sigma+1}}du\geq \frac12\left(\int_1^N\frac{M_f(u)}{u^{\sigma+1}}-I\right).
\]
Substituting this back into the first inequality, we deduce
\[
|\zeta_f(\sigma+it)|\leq 2(1+|t|)\zeta_f(\sigma)+2(1+|t|)I.
\]
The key to the proof of our result is now the discrepancy between upper bounds for $I$ and $\zeta_f(\sigma)$ for typical $f$ and lower bounds for $\zeta_f(\sigma+it)$. While the former are obtainable by a very elementary probabilistic arguments, we need to invoke results by A.\,Harper \cite{Harp} to prove the latter in the form suitable to our goals.

Taking maximum over $|t|\leq \frac13\ln\ln^3 N-1$, from the inequalities above we get
\begin{lemma}
If $M_f(t)\geq 0$ for all $t\leq N$, then
\[
\sup_{|t|\leq 1/3\ln\ln^3 N-1}|\zeta_f(\sigma+it)|\leq \ln\ln^3 N(\zeta_f(\sigma)+I(N)).
\]
\end{lemma}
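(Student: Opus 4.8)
The plan is to assemble the chain of elementary inequalities already recorded above into a single statement by taking a supremum over the relevant range of $t$. Everything rests on the Abel summation identity
\[
\zeta_f(\sigma+it)=(\sigma+it)\int_1^{+\infty}\frac{M_f(u)}{u^{\sigma+1+it}}\,du,
\]
valid for every $f$ in the full-measure set on which $M_f(u)=O(u^{1/2+o(1)})$, so that the integral converges absolutely for $\mathrm{Re}\,s>1/2$.

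First I would use the hypothesis $M_f(u)\geq 0$ for $u\leq N$. Splitting the integral at $N$, using $|u^{-it}|=1$ on the piece over $[1,N]$ (where the integrand is nonnegative) and the trivial bound on the tail, together with $|\sigma+it|\leq 1+|t|$ (which holds once $N$ is large enough that $\sigma<2$, and otherwise costs only a harmless constant), one obtains
\[
|\zeta_f(\sigma+it)|\leq (1+|t|)\left(\int_1^N\frac{M_f(u)}{u^{\sigma+1}}\,du+I(N)\right),
\]
with $I(N)=\int_N^{+\infty}|M_f(u)|u^{-\sigma-1}\,du$. Next I would control the integral over $[1,N]$ in terms of $\zeta_f(\sigma)$: applying the same identity at the real point $s=\sigma$ and splitting again at $N$, positivity of $M_f$ on $[1,N]$ gives
\[
\zeta_f(\sigma)=\sigma\int_1^{+\infty}\frac{M_f(u)}{u^{\sigma+1}}\,du\geq \tfrac12\left(\int_1^N\frac{M_f(u)}{u^{\sigma+1}}\,du-I(N)\right),
\]
hence $\int_1^N M_f(u)u^{-\sigma-1}\,du\leq 2\zeta_f(\sigma)+I(N)$ (the edge case in which this integral is already smaller than $I(N)$ being trivial).

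Substituting into the previous display yields $|\zeta_f(\sigma+it)|\leq 2(1+|t|)\zeta_f(\sigma)+3(1+|t|)I(N)$, exactly the inequality stated just before the lemma. Finally I would take the supremum over $|t|\leq \tfrac13\ln\ln^3 N-1$, so that $1+|t|\leq \tfrac13\ln\ln^3 N$ and both prefactors $2(1+|t|)$ and $3(1+|t|)$ are at most $\ln\ln^3 N$ for $N$ large, which gives
\[
\sup_{|t|\leq \frac13\ln\ln^3 N-1}|\zeta_f(\sigma+it)|\leq \ln\ln^3 N\,\bigl(\zeta_f(\sigma)+I(N)\bigr),
\]
the claimed bound. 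I do not expect a genuine obstacle here: the lemma is pure bookkeeping, and all the real difficulty — sharp probabilistic upper bounds for $\zeta_f(\sigma)$ and for $I(N)$, and Harper's lower bound for $\sup_{|t|\le\frac13\ln\ln^3N}|\zeta_f(\sigma+it)|$ — is postponed to the later steps that turn this lemma into the estimate $m(N)\ll(\ln N)^{-c+o(1)}$. The only points needing a moment's care are that the numerical constants and the factor $|\sigma+it|\to 1$ really do fit underneath the deliberately generous factor $\ln\ln^3 N$ (true for all sufficiently large $N$), and the trivial sign case noted above.
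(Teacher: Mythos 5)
Your proposal is correct and follows exactly the paper's own route: the Abel summation identity, splitting the integral at $N$, using positivity of $M_f$ on $[1,N]$ to bound that piece by $2\zeta_f(\sigma)+I(N)$ via the identity at the real point $\sigma$, and then taking the supremum over $|t|\leq \frac13\ln\ln^3N-1$. The small points you flag (the factor $|\sigma+it|\leq 1+|t|$ and the trivial sign case) are handled the same way, implicitly, in the paper.
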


To start with elementary arguments, let us compute the expectation of $I(N)$.

\begin{lemma}
We have
\[
\mathbb EI(N)\ll \frac{1}{(\ln N)^{3/2}}
\]
\end{lemma}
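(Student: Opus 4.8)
The plan is to bring the expectation inside the integral and reduce everything to a pointwise (in $u$) second-moment bound for the partial sums. Since the integrand $|M_f(u)|u^{-\sigma-1}$ is non-negative, Tonelli's theorem gives
\[
\mathbb E I(N)=\int_N^{+\infty}\frac{\mathbb E|M_f(u)|}{u^{\sigma+1}}\,du,
\]
so it suffices to bound $\mathbb E|M_f(u)|$ uniformly in $u$. The trivial bound $|M_f(u)|\le u$ is not good enough (for $\sigma$ near $\tfrac12$ it would not even make the tail integral converge), so instead I would use Cauchy--Schwarz, $\mathbb E|M_f(u)|\le\big(\mathbb E M_f(u)^2\big)^{1/2}$, and compute the second moment exactly.

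For the second moment, expand $M_f(u)^2=\sum_{m,n\le u}f(m)f(n)$. Because the $X_p$ are independent Rademacher variables, $\mathbb E[f(m)f(n)]=\prod_p\mathbb E\big[X_p^{\nu_p(m)+\nu_p(n)}\big]$, which equals $1$ when $\nu_p(m)+\nu_p(n)$ is even for every prime $p$ and $0$ otherwise; that is, $\mathbb E[f(m)f(n)]=1$ precisely when $mn$ is a perfect square. Hence $\mathbb E M_f(u)^2=\#\{(m,n):m,n\le u,\ mn=\square\}$. Two positive integers have a square product iff they have the same squarefree part $d$, in which case $m=ds^2$, $n=dt^2$ with $d$ squarefree and $s,t\ge 1$; grouping the pairs by $d$ gives
\[
\mathbb E M_f(u)^2=\sum_{\substack{d\le u\\ \mu^2(d)=1}}\lfloor\sqrt{u/d}\rfloor^2\le u\sum_{d\le u}\frac1d\ll u\ln u,
\]
so $\mathbb E|M_f(u)|\ll\sqrt{u\ln u}$ for all $u\ge 2$.

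Substituting this estimate and changing variables $u=e^v$,
\[
\mathbb E I(N)\ll\int_N^{+\infty}(\ln u)^{1/2}u^{-\sigma-1/2}\,du=\int_{\ln N}^{+\infty}v^{1/2}e^{-\eta v}\,dv,\qquad \eta:=\sigma-\tfrac12>0.
\]
By the elementary tail estimate $\int_B^{+\infty}w^{1/2}e^{-w}\,dw\ll B^{1/2}e^{-B}$ (valid for $B\ge 1$), the rescaling $w=\eta v$ gives $\int_{\ln N}^{+\infty}v^{1/2}e^{-\eta v}\,dv\ll \eta^{-1}(\ln N)^{1/2}e^{-\eta\ln N}$. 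It remains to insert the value of $\sigma=\sigma_N$: since $\eta\ge\tfrac{3\ln\ln N}{\ln N}$, one has $e^{-\eta\ln N}\le(\ln N)^{-3}$ and $\eta^{-1}\le\tfrac{\ln N}{3\ln\ln N}$ for large $N$, whence
\[
\mathbb E I(N)\ll\frac{\ln N}{\ln\ln N}\cdot(\ln N)^{1/2}\cdot(\ln N)^{-3}\ll\frac{1}{(\ln N)^{3/2}}.
\]
There is no serious obstacle here: the only substantive point is the identity $\mathbb E M_f(u)^2=\#\{m,n\le u:\ mn=\square\}$ together with the classical count $\ll u\ln u$; the rest is bookkeeping with the tail integral, and the one thing to watch is that the decay exponent $\eta=\sigma_N-\tfrac12$ tends to $0$ with $N$, so the gain $N^{-\eta}=(\ln N)^{-3}$ has to be weighed against the loss $\eta^{-1}\asymp\ln N/\ln\ln N$ coming from the length of the integration range.
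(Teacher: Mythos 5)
Your proof is correct and follows essentially the same route as the paper: Tonelli plus Cauchy--Schwarz, the second-moment identity $\mathbb E M_f(u)^2=\#\{m,n\le u:\ mn=\square\}\ll u\ln u$ via squarefree parts, and then the tail integral with $\eta=\sigma_N-\tfrac12=3\ln\ln N/\ln N$, so that the gain $(\ln N)^{-3}$ beats the losses $\eta^{-1}$ and $(\ln N)^{1/2}$. The only difference from the paper is cosmetic bookkeeping in the change of variables for the tail integral.
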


To obtain this result, let us first notice that for $u\geq 2$
\[
\mathbb EM_f(u)^2\ll u\ln u.
\]
Indeed,
\[
\mathbb EM_f(u)^2=\mathbb E\left(\sum_{n\leq u}f(u)\right)^2=\sum_{a,b\leq u}\mathbb Ef(a)f(b).
\]
Next, due to complete multiplicativity, $\mathbb Ef(a)f(b)=\mathbb Ef(ab)$, which is equal to $0$ unless $ab$ is a square, in which case it is equal to $1$. The product of two integers is a square if and only if their squarefree parts are equal and the number of $a\leq u$ with a given squarefree part $d$ is $\left[\sqrt{\frac{u}{d}}\right]$, hence
\[
\mathbb EM_f(u)^2=\sum_{d\leq u}\mu(d)^2\left[\sqrt{\frac{u}{d}}\right]^2\leq \sum_{d\leq u}\frac{u}{d}\ll u\ln u,
\]
as needed. Therefore, by Cauchy-Bunyakovsky-Schwarz inequality,
\[
\mathbb E|M_f(u)|\ll \sqrt{u\ln u},
\]
so
\[
\mathbb EI(N)\ll \int_N^{+\infty}\frac{\sqrt{u\ln u}}{u^{\sigma+1}}du.
\]
Changing variables to $u=yN$ and using the inequality $\sqrt{\ln yN}\leq \sqrt{\ln y}+\sqrt{\ln N}$, we get
\[
\mathbb EI(N) \ll N^{-3\ln\ln N/\ln N}\int_1^{+\infty}(\sqrt{\ln y}+\sqrt{\ln N})y^{-1-3\ln\ln N/\ln N}dy\leq
\]
\[
\leq (\ln N)^{-3}\int_0^{+\infty}(\sqrt{\ln y}+\sqrt{\ln N})y^{-1-3\ln\ln N/\ln N}dy.
\]
Substitution $y=\exp(v\ln N/3\ln\ln N)$ gives
\[
\mathbb EI(N)\ll (\ln N)^{-2}\int_0^{+\infty}\sqrt{\ln N}(\sqrt{v}+1)e^{-v}dv\ll (\ln N)^{-3/2},
\]
as needed.

In particular, by Markov's inequality we have
\[
\mathbb P(I(N)\geq 1)\ll \frac{1}{(\ln N)^{3/2}}.
\]
To estimate the probability of the event that $\zeta_f(\sigma)$ is large, we are going to exploit sub-Gaussian nature of its logarithm. More precisely, we are going to use the following result:
\begin{lemma}
Let $s_2,s_3,s_5,\ldots$ be the sequence of real numbers, indexed by prime numbers, and suppose that
\[
\sum_p s_p^2=s
\]
is finite. Then for any $\gamma\geq 0$ we have
\[
\mathbb P(\sum_p s_pf(p)\geq \gamma)\leq \exp\left(-\frac{\gamma^2}{2s}\right).
\]
\end{lemma}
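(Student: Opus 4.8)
The plan is the classical Chernoff (exponential moment) argument, combined with a standard truncation to handle the infinite sum. For a parameter $\lambda>0$ and a positive integer $T$, set $A_T=\sum_{p\leq T}s_pf(p)$ and $s_T=\sum_{p\leq T}s_p^2\leq s$. Since the variables $f(p)$ are independent with $\mathbb Ee^{af(p)}=\tfrac12(e^a+e^{-a})=\cosh a$, independence gives
\[
\mathbb Ee^{\lambda A_T}=\prod_{p\leq T}\cosh(\lambda s_p).
\]
Now I would invoke the elementary inequality $\cosh x\leq e^{x^2/2}$, valid for all real $x$: comparing Taylor series, $\cosh x=\sum_{k\geq 0}\frac{x^{2k}}{(2k)!}$ and $e^{x^2/2}=\sum_{k\geq 0}\frac{x^{2k}}{2^kk!}$, and $(2k)!\geq 2^kk!$ for every $k$. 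Consequently
\[
\mathbb Ee^{\lambda A_T}\leq\prod_{p\leq T}e^{\lambda^2 s_p^2/2}=e^{\lambda^2 s_T/2}\leq e^{\lambda^2 s/2}.
\]

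To pass to the full sum, note that $\sum_p s_p^2=s<\infty$ means the partial sums $A_T$ form an $L^2$-bounded martingale with respect to the filtration generated by $f(2),\ldots,f(T)$, hence $A_T$ converges almost surely to $A=\sum_p s_pf(p)$. Applying Fatou's lemma to $e^{\lambda A_T}\to e^{\lambda A}$ gives
\[
\mathbb Ee^{\lambda A}\leq\liminf_{T\to\infty}\mathbb Ee^{\lambda A_T}\leq e^{\lambda^2 s/2}.
\]
Markov's inequality then yields, for every $\lambda>0$,
\[
\mathbb P\left(\sum_p s_pf(p)\geq\gamma\right)=\mathbb P\left(e^{\lambda A}\geq e^{\lambda\gamma}\right)\leq e^{-\lambda\gamma}\mathbb Ee^{\lambda A}\leq\exp\left(-\lambda\gamma+\frac{\lambda^2 s}{2}\right).
\]
The exponent is minimized at $\lambda=\gamma/s$ (assuming $s>0$; the cases $s=0$ and $\gamma=0$ are trivial), and substituting this value gives exactly $\exp\left(-\frac{\gamma^2}{2s}\right)$, as claimed.

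I expect no serious obstacle: this is a textbook Hoeffding-type bound, and the only place demanding a word of care is the interchange of expectation with the infinite sum, which the $L^2$-martingale convergence plus Fatou step handles cleanly. One could equally well establish the inequality for each truncation $A_T$ first and then take a limit on the probability side using the inclusion $\{A\geq\gamma\}\subseteq\liminf_T\{A_T\geq\gamma-\varepsilon\}$ up to a null set, but routing everything through the moment generating function is shorter and more transparent.
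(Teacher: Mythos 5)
Your proposal is correct and follows essentially the same route as the paper: the exponential moment (Chernoff) bound via independence, the inequality $\cosh x\leq e^{x^2/2}$, Markov's inequality, and the optimal choice $\lambda=\gamma/s$. The only difference is that you make the passage from finite truncations to the infinite sum explicit via $L^2$-martingale convergence and Fatou, a point the paper treats implicitly (noting only afterwards that the series converges almost surely by Kolmogorov's three-series theorem); this is a welcome but minor refinement.
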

\begin{proof}
Take $T>0$ and compute
\[
\mathbb E\exp(T\sum_p s_pf(p)).
\]
Due to independence of $f(p)$, we have
\[
\mathbb E\exp(T\sum_p s_pf(p))=\prod_p \exp(Ts_pf(p))=\prod_p\cosh(Ts_p)\leq \prod_p\exp(T^2s_p^2/2)=\exp(T^2s/2),
\]
since $\cosh(t)\leq \exp(t^2/2)$ for all $t\in\mathbb R$. Any $f$ with $\sum_p s_pf(p)\geq \gamma$ contributes at least $\exp(T\gamma)$ to the expectation above, so for any $T\geq 0$ we get
\[
\mathbb P(\sum_p s_pf(p)\geq \gamma)\leq \exp(T^2s/2-T\gamma).
\]
Choosing $T=\gamma/s$, we get the desired result.
\end{proof}

Note also that the series converges almost surely in view of Kolmogorov's three series theorem.

Now, let us fix some constant $1/2<\alpha<1$. Using previous lemma and Euler product for $\zeta_f$, we deduce the bound
\begin{lemma}
For $N\to +\infty$ we have
\[
\mathbb P(\zeta_f(\sigma_N)\geq (\ln N)^{\alpha}(\ln\ln N)^{-3}-1)\leq (\ln N)^{-(\alpha-1/2)^2/2+o(1)}.
\]
\end{lemma}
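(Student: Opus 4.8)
The plan is to work with the Euler product for $\zeta_f$, isolate in $\log\zeta_f(\sigma_N)$ the part linear in the variables $f(p)$, and finish with the sub-Gaussian tail bound of the previous lemma. Since $\sigma=\sigma_N>1$ and $|f(n)|=1$, the Dirichlet series for $\zeta_f$ converges absolutely, so
\[
\zeta_f(\sigma)=\prod_p\big(1-f(p)p^{-\sigma}\big)^{-1},
\]
every factor being a positive real. Expanding $-\log\big(1-f(p)p^{-\sigma}\big)=\sum_{k\geq 1}f(p)^k/(kp^{k\sigma})$ — legitimate because $p^{-\sigma}<1$, and the double series converges absolutely since $\sum_p\sum_{k\geq 1}\frac1{kp^{k\sigma}}\leq 2\sum_p p^{-\sigma}<\infty$ — I would write
\[
\log\zeta_f(\sigma)=\sum_p\frac{f(p)}{p^\sigma}+R_f,\qquad |R_f|\leq\sum_p\sum_{k\geq 2}\frac1{kp^{k\sigma}}\leq\sum_p p^{-2\sigma}\leq\sum_p p^{-2}=:C_0,
\]
the bound on $R_f$ being uniform in $f$ and in $\sigma\geq 1$.

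Next I would reduce the event in the lemma to a one-sided linear deviation. For $N$ so large that $(\ln N)^{\alpha}(\ln\ln N)^{-3}-1\geq\tfrac12(\ln N)^{\alpha}(\ln\ln N)^{-3}$, the inequality $\zeta_f(\sigma_N)\geq(\ln N)^{\alpha}(\ln\ln N)^{-3}-1$ forces $\log\zeta_f(\sigma)\geq\alpha\ln\ln N-3\ln\ln\ln N-\log 2$, hence by the identity above
\[
\sum_p\frac{f(p)}{p^\sigma}\geq\gamma_N:=\alpha\ln\ln N-3\ln\ln\ln N-\log 2-C_0=(\alpha+o(1))\ln\ln N,
\]
which is positive for all large $N$. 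Applying the previous lemma with $s_p=p^{-\sigma}$ and $s=\sum_p p^{-2\sigma}\leq C_0$ (the bound on $s$ being valid because $2\sigma>2$), I get
\[
\mathbb P\Big(\sum_p\frac{f(p)}{p^\sigma}\geq\gamma_N\Big)\leq\exp\Big(-\frac{\gamma_N^2}{2s}\Big)\leq\exp\Big(-\frac{\gamma_N^2}{2C_0}\Big)\leq\exp\big(-c\,(\ln\ln N)^2\big)
\]
for some $c=c(\alpha)>0$ and all large $N$; as $N\to+\infty$ this is smaller than $(\ln N)^{-C}$ for every fixed $C$, and in particular it is at most $(\ln N)^{-(\alpha-1/2)^2/2+o(1)}$, which proves the lemma. (The argument in fact yields far more than is claimed; the stated shape is all that is needed in the subsequent combination with Harper's lower bounds.)

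I do not expect a genuine obstacle here — this is the elementary half of the eventual estimate for $m(N)$, the substantive input being the lower bounds for $\sup_{|t|}|\zeta_f(\sigma+it)|$ invoked elsewhere. The only points needing a little care are the uniform-in-$f$ control of the Euler-product remainder $R_f$ together with the (harmless) absolute rearrangement of the double series, and the verification that the threshold $\gamma_N$ is non-negative, so that the previous lemma genuinely applies and the resulting exponential bound is non-trivial.
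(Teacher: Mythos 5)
Your argument hinges entirely on reading $\sigma_N=1+\frac{3\ln\ln N}{\ln N}$ literally, so that $\sigma>1$, the Euler product converges absolutely, $s=\sum_p p^{-2\sigma}=O(1)$, and the remainder $R_f$ is uniformly bounded. But that ``$1$'' is a typo for ``$1/2$'': the paper explicitly works ``near the line $\mathrm{Re}\,s=\frac12$'', and every subsequent computation forces $\sigma_N=\frac12+\frac{3\ln\ln N}{\ln N}$ --- the evaluation $\ln\zeta(2\sigma)=\ln(2\sigma-1)^{-1}+O(1)=(1+o(1))\ln\ln N$ in the proof of this very lemma, the estimate of $\mathbb EI(N)$ (where $u^{1/2}/u^{\sigma+1}=u^{-1-3\ln\ln N/\ln N}$), and the appeal to Harper's results, which concern $\zeta_f$ near the critical line. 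A strong warning sign you should have caught: your conclusion is $\exp(-c(\ln\ln N)^2)$, vastly stronger than the stated bound $(\ln N)^{-(\alpha-1/2)^2/2+o(1)}$, whose exponent is precisely the Gaussian tail coming from a deviation of $(\alpha-\tfrac12)\ln\ln N$ against a variance of $(1+o(1))\ln\ln N$. When a two-line argument proves something far stronger than what a paper claims as a lemma, the first suspect is a misread hypothesis.

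With the correct $\sigma_N$, both pillars of your proof collapse. First, $R_f$ is no longer $O(1)$: the second-order term of the Euler expansion is the \emph{deterministic} quantity $\sum_p\frac{f(p)^2}{2p^{2\sigma}}=\frac12\ln\zeta(2\sigma)+O(1)=(\tfrac12+o(1))\ln\ln N$, which is unbounded and is exactly what reduces the threshold for the linear part $\xi=\sum_p f(p)p^{-\sigma}$ from $\alpha\ln\ln N$ to $(\alpha-\tfrac12-o(1))\ln\ln N$; this is where the $\alpha-\tfrac12$ in the exponent comes from, and your proof has no mechanism to produce it. Second, $s=\sum_p p^{-2\sigma}=(1+o(1))\ln\ln N$ is unbounded, so Lemma 5 yields $\exp\bigl(-(1+o(1))\frac{(\alpha-1/2)^2(\ln\ln N)^2}{2\ln\ln N}\bigr)=(\ln N)^{-(\alpha-1/2)^2/2+o(1)}$, which is the actual content of the lemma. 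Finally, your justification of the Euler product and the rearrangement via absolute convergence of $\sum_n f(n)n^{-\sigma}$ is unavailable for $\sigma$ near $\tfrac12$; there one must rely on the almost-sure convergence coming from $M_f(x)=O(x^{1/2+o(1)})$ a.s.\ and Kolmogorov's three-series theorem, as the paper indicates. So the proposal, while internally consistent for $\sigma>1$, does not prove the lemma that the rest of the paper needs.
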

\begin{proof}
Indeed, due to the Euler product for $\zeta_f(\sigma)$, we have
\[
\ln \zeta_f(\sigma)=\sum_p \frac{f(p)}{p^{\sigma}}+\sum_p \frac{f(p)^2}{p^{2\sigma}}+O(1)=\xi+\frac12\ln \zeta(2\sigma)+O(1),
\]
where in the second summand we have the deterministic Riemann zeta-function instead of $\zeta_f$. Next, $\sigma=1/2+\frac{3\ln\ln N}{\ln N}$, so
\[
\ln\zeta(2\sigma)=\ln(2\sigma-1)^{-1}+O(1)=(1+o(1))\ln\ln N.
\]
Therefore, the inequality $\zeta_f(\sigma)\geq  (\ln N)^{\alpha}(\ln\ln N)^{-3}-1$ implies that $\xi\geq (\alpha-1/2-o(1))\ln\ln N$. Notice now that $\xi$ is of the type discussed in Lemma 5 with $s_p=\frac{1}{p^\sigma}$ and
\[
s=\sum_p s_p^2=\ln \zeta(2\sigma)+O(1)=(1+o(1))\ln\ln N.
\]
Thus, from Lemma 5 we obtain
\[
\mathbb P(\xi\geq (\alpha-1/2-o(1))\ln\ln N)\leq \exp\left(-(1+o(1))(\alpha-1/2)^2/2\ln\ln^2 N/\ln\ln N\right)=
\]
\[
=\frac{1}{(\ln N)^{(\alpha-1/2)^2/2+o(1)}},
\]
which concludes the proof.
\end{proof}

Lemmas 3, 4 and 6 together result in the bound for $\zeta(\sigma+it)$ for small $t$.
\begin{corollary}
If $M_f(u)\geq 0$ for all $u\leq N$, then with probability
\[
1-O((\ln N)^{-(\alpha-1/2)^2/2+o(1)})
\]
we have
\[
\sup_{|t|\leq 1/3\ln\ln^3 N-1}|\zeta_f(\sigma+it)|\leq (\ln N)^\alpha.
\]
\end{corollary}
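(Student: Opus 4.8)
The plan is to obtain the Corollary by combining Lemmas 3, 4 and 6 through a simple union bound, with no new ideas required. Write $A$ for the event $\{M_f(u)\geq 0\text{ for all }u\leq N\}$ and $B$ for the event $\{\sup_{|t|\leq \frac13\ln\ln^3 N-1}|\zeta_f(\sigma+it)|\leq (\ln N)^\alpha\}$; the goal is to show that $A\setminus B$ has probability $O((\ln N)^{-(\alpha-1/2)^2/2+o(1)})$, which is exactly what the stated ``with probability $1-O(\cdot)$'' assertion means.

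First I would introduce the two ``bad'' events $E_1=\{I(N)\geq 1\}$ and $E_2=\{\zeta_f(\sigma_N)\geq (\ln N)^\alpha(\ln\ln N)^{-3}-1\}$. Lemma 4 combined with Markov's inequality gives $\mathbb P(E_1)\ll (\ln N)^{-3/2}$, and Lemma 6 gives $\mathbb P(E_2)\leq (\ln N)^{-(\alpha-1/2)^2/2+o(1)}$. Since $1/2<\alpha<1$ forces $(\alpha-1/2)^2/2<1/8<3/2$, the term coming from $E_1$ is negligible next to the term coming from $E_2$, and therefore $\mathbb P(E_1\cup E_2)\leq \mathbb P(E_1)+\mathbb P(E_2)=O((\ln N)^{-(\alpha-1/2)^2/2+o(1)})$.

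Next I would verify the set inclusion $A\setminus(E_1\cup E_2)\subseteq B$. On the event $A$, Lemma 3 yields $\sup_{|t|\leq \frac13\ln\ln^3 N-1}|\zeta_f(\sigma+it)|\leq \ln\ln^3 N\,(\zeta_f(\sigma)+I(N))$. Outside $E_1$ we have $I(N)<1$, and outside $E_2$ we have $\zeta_f(\sigma)<(\ln N)^\alpha(\ln\ln N)^{-3}-1$, so that $\zeta_f(\sigma)+I(N)<(\ln N)^\alpha(\ln\ln N)^{-3}$; multiplying by $\ln\ln^3 N=(\ln\ln N)^3$ gives precisely $(\ln N)^\alpha$, i.e. the event $B$ holds. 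Hence $A\setminus B\subseteq E_1\cup E_2$, and the probability bound of the previous paragraph finishes the proof.

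There is essentially no real obstacle here: the argument is purely a union bound. The only thing requiring mild care is the bookkeeping of the $o(1)$ exponents and the verification that the error $(\ln N)^{-3/2}$ from Lemma 4 is dominated by $(\ln N)^{-(\alpha-1/2)^2/2}$, which holds because $\alpha<1$. All of the genuine work has already been carried out in Lemmas 3, 4 and 6 — in particular in the second-moment computation for $M_f$ that controls $\mathbb E I(N)$ and in the sub-Gaussian tail estimate for $\log\zeta_f(\sigma)$ via Lemma 5.
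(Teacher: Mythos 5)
Your proof is correct and follows exactly the route the paper intends: the paper states the Corollary as an immediate consequence of Lemmas 3, 4 and 6 without writing out the union bound, and your argument supplies precisely that bookkeeping, including the correct observation that the threshold $(\ln N)^\alpha(\ln\ln N)^{-3}-1$ in Lemma 6 is chosen so that adding $I(N)<1$ and multiplying by $(\ln\ln N)^3$ yields $(\ln N)^\alpha$, and that $(\alpha-1/2)^2/2<3/2$ makes the Markov term negligible.
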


Now, let us choose two parameters $\beta$ and $\gamma$ with $\beta-\gamma>\alpha$. Take $y=\exp((\ln N)^{1-\beta})$. By \cite{Harp}, we have
\begin{lemma}
With probability
\[
1-O\left(\frac{1}{(\ln y)^{1-o(1)}}\right)=1-O\left(\frac{1}{(\ln N)^{1-\beta-o(1)}}\right)
\]
there is a value $1\leq t\leq (\ln\ln N)^3(\ln\ln\ln N)^{-1/3}$ for which we have
\[
\sum_{y<p\leq \exp(\ln N/3\ln\ln N)}\frac{f(p)\cos(t\ln p)}{p^\sigma}\geq (\ln\ln N-\ln\ln y)(1+o(1))=(\beta+o(1))\ln\ln N.
\]
\end{lemma}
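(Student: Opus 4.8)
The plan is to deduce this estimate from the results of Harper \cite{Harp} on large values of random Euler products over short intervals of the critical line. Put $z=\exp(\ln N/3\ln\ln N)$ and
\[
G(t)=\operatorname{Re}\sum_{y<p\le z}\frac{f(p)}{p^{\sigma+it}}=\sum_{y<p\le z}\frac{f(p)\cos(t\ln p)}{p^{\sigma}},
\]
which is the sum in the statement, the signs $f(p)$ being the source of the probability. It is a sum of independent mean-zero contributions, one per prime. Since $\sigma$ is close to $\tfrac12$ (we have $\sigma-\tfrac12=3\ln\ln N/\ln N$), the total variance available to the family is $\sum_{y<p\le z}p^{-2\sigma}=\ln\ln z-\ln\ln y+o(\ln\ln N)=(\beta+o(1))\ln\ln N$, exactly the threshold of the lemma; and the family is logarithmically correlated in $t$: for $1/\ln z\le|t_1-t_2|\le1/\ln y$ one has $\mathbb E\,G(t_1)G(t_2)=\tfrac12\bigl(\ln\tfrac1{|t_1-t_2|}-\ln\ln y\bigr)+O(1)$, while the covariance is $O(1)$ for $|t_1-t_2|\gg1/\ln y$. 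Thus $\{G(t)\}_{1\le t\le(\ln\ln N)^{8/3}}$ is a discretisation of a log-correlated field built over about $(\beta+o(1))\ln\ln N$ prime scales, and the event of the lemma is that its supremum over that window reaches the level $\sum_{y<p\le z}p^{-2\sigma}$.

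With this set-up, the assertion is exactly the output of the relevant theorem of \cite{Harp}: for such a field it furnishes, with probability $(\ln y)^{-1+o(1)}=(\ln N)^{-(1-\beta)+o(1)}$, a point $t$ of the window at which $G(t)\ge\sum_{y<p\le z}p^{-2\sigma}-o(\ln\ln N)=(\beta+o(1))\ln\ln N$. It remains to verify the hypotheses: the window $[1,(\ln\ln N)^{8/3}]$ is long enough to resolve every scale, since its length exceeds $1/\ln y$ (the reciprocal of the coarsest frequency present) and its length times $\ln z$ is $(\ln N)^{1+o(1)}$ (enough for the finest); and replacing $G(t)$ by $\ln\bigl|\prod_{y<p\le z}(1-f(p)p^{-\sigma-it})^{-1}\bigr|$, as well as including or omitting the primes outside $(y,z]$, changes the relevant quantities by $o(\ln\ln N)$ with probability $1-o(1)$, uniformly over the window --- for instance the prime-square term $\tfrac12\sum_{y<p\le z}\tfrac{\cos(2t\ln p)}{p^{2\sigma}}$ is $O(\ln\ln\ln N)$ because $2\sigma+2it$ lies in the classical zero-free region of $\zeta$ throughout $|t|\le(\ln\ln N)^{8/3}$.

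The main difficulty is entirely the cited input. Producing a $t$ in so short a window at which the random, logarithmically correlated polynomial $G$ climbs to $(\beta+o(1))\ln\ln N$, with exactly the probability $(\ln y)^{-1+o(1)}$, is a delicate ballot-type (barrier) estimate for the branching-random-walk structure underlying $G$; this is the point at which the recent work of Harper is invoked rather than reproved. The remaining ingredients --- recasting $G$ as above, checking that the window sees all scales, and discarding the $O(1)$-per-scale errors coming from higher prime powers and from the primes outside $(y,z]$ --- are routine.
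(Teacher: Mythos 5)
Your overall strategy coincides with the paper's: the author's entire proof of this lemma is the single sentence ``set $B=(\ln\ln N)^3(\ln\ln\ln N)^{-1/3}$ in the first formula on p.~606 and then apply the last formula of Appendix B'' of \cite{Harp}, so both you and the paper defer the hard probabilistic input to Harper and supply only the surrounding reduction. The verifications you add --- the covariance $\mathbb E\,G(t_1)G(t_2)=\tfrac12\bigl(\ln|t_1-t_2|^{-1}-\ln\ln y\bigr)+O(1)$ in the stated range, the evaluation $\sum_{y<p\le \exp(\ln N/3\ln\ln N)}p^{-2\sigma}=(\beta+o(1))\ln\ln N$, and the $O(\ln\ln\ln N)$ control of the prime-square terms via $\log\zeta(2\sigma+2it)$ --- are correct and are more than the paper records. (Incidentally, the factor $(\ln\ln N)^{-1/3}$ in the stated $t$-range is a typo for $(\ln\ln\ln N)^{-1/3}$, as the paper's choice of $B$ shows; this is harmless.)

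There is, however, one substantive point on which your reading would derail the argument. You treat the lemma as a \emph{lower} bound $(\ln y)^{-1+o(1)}$ on the probability of a rare event, and you describe Harper's input accordingly. But the target level $(\beta+o(1))\ln\ln N$ lies \emph{below} the typical maximum of this field over the window: the pointwise variance is $\tfrac{\beta}{2}\ln\ln N$, the window contains roughly $B\ln N/(3\ln\ln N)$ effectively independent points, and a first-moment count (or the branching-random-walk heuristic $\sqrt{n(n+\ln K)}$ you allude to) puts the supremum at $(\sqrt{\beta}+o(1))\ln\ln N>\beta\ln\ln N$. The event is therefore typical, and what the lemma must deliver --- and what the cited formula of \cite{Harp} delivers --- is that it \emph{fails} with probability at most $(\ln y)^{-1+o(1)}$. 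This is forced by how the lemma is used: the subsequent argument concludes that $\sup_t|\zeta_f(\sigma+it)|\ge(\ln N)^{\beta-\gamma-o(1)}$ holds with probability $1-O\bigl((\ln N)^{-\min(\gamma^2,1-\beta)+o(1)}\bigr)$, and the term $1-\beta$ in the final exponent $\min\bigl(1-\beta,\gamma^2,(\alpha-1/2)^2/2\bigr)$ is precisely this lemma's failure probability; a small lower bound on the success probability could not be combined with Corollary~1 (which only bounds the probability of the intersection with the event $\{M_f(u)\ge0 \text{ for all } u\le N\}$) to bound $m(N)$ at all. So the statement should be read, and proved, as: with probability $1-(\ln y)^{-1+o(1)}$ there is such a $t$. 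Your reduction survives unchanged, but the black box you need from Harper is an upper bound for $\mathbb P\bigl(\sup_{1\le t\le B}G(t)<(\beta+o(1))\ln\ln N\bigr)$, not a lower bound for the complementary event.
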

To see this, set $B=(\ln\ln N)^3(\ln\ln\ln N)^{-1/3}$ in the first formula on p.\,606 and then apply the last formula of Appendix B. 

Obviously, such a value of $t$ can be chosen to be measurable with respect to $\sigma-$algebra, generated by random variables $f(p)$ for $y<p\leq \exp(\ln N/3\ln\ln N)$. This means that if want to estimate the rest of the series, our Lemma 5 remains applicable due to the independence of $f(p)$. Note also that $|\log\zeta(2\sigma+it)|\ll \ln\ln |t|$ for $|t|\geq 2$, see \cite{MonVau}, Theorem 6.7, hence
\[
\sum_p \frac{\cos^2(t\ln p)}{p^{2\sigma}}=\sum_p \frac{1+\cos(2t\ln p)}{2p^{2\sigma}}=\frac12(\zeta(2\sigma)+\zeta(2\sigma+it))+O(1)=(\frac12+o(1))\ln\ln N.
\]
Therefore, Lemma 5 gives
\[
\mathbb P(-\sum_{p\leq y\text{ or }p>\exp(\ln N/3\ln\ln N)}\frac{f(p)\cos(t\ln p)}{p^\sigma}\geq \gamma \ln\ln N)\leq (\ln N)^{-\gamma^2+o(1)}.
\]
This means that the inequality
\[
\sup_{1\leq |t|\leq 1/3\ln\ln^3 N-1}\sum_p \frac{f(p)\cos(t\ln p)}{p^\sigma}\geq (\beta-\gamma)\ln\ln N
\]
is true with probability
\[
1-O((\ln N)^{-\gamma^2+o(1)})-O((\ln N)^{-\beta+o(1)}).
\]
Due to the fact that
\[
\ln|\zeta_f(\sigma+it)|=\sum_p \frac{f(p)\cos(t\ln p)}{p^\sigma}+\frac12\ln|\zeta(\sigma+it)|+O(1)=\sum_p \frac{f(p)\cos(t\ln p)}{p^\sigma}+o(\ln\ln N)
\]
we see now that for $\delta=\min(\gamma^2,1-\beta)$, the inequality
\[
\sup_{1\leq|t|\leq 1/3\ln\ln^3N-1}|\zeta_f(\sigma+it)|\geq (\ln N)^{\beta-\gamma-o(1)}>(\ln N)^\alpha
\]
holds with probability $1-O((\ln N)^{-\delta})$. On the other hand, Corollary 1 shows that the opposite of this inequality is true with large probability if $M_f(u)\geq 0$ for all $u\leq N$. Applying these estimates together, we conclude that
\[
m(N)\ll (\ln N)^{-\min(1-\beta,\gamma^2,(\alpha-1/2)^2/2)+o(1)}.
\]
Choosing $1-\beta=\gamma^2$ and $\beta=\alpha+\varepsilon+\gamma$ for $\varepsilon\to 0$ we see that for any $\alpha$ between $1/2$ and $1$
\[
m(N)\ll (\ln N)^{-f(\alpha)+o(1)}
\]
for $f(\alpha)=\min((\alpha-1/2)^2/2, 3/2-\alpha-\sqrt{5/4-\alpha})$. This quantity attains its maximal value $c\approx 0.0368$ at $\alpha_{\max}\approx 0.77133$, which concludes the proof of Theorem 1.
\section{Numerical evidence and conclusions}

The estimate of Theorem 2 seems quite weak due to the size of constant $c$. To even recover the obvious bound of $1/2$ for the density of $\mathcal L^+$ one should take $x$ to be around $10^{10^{10^{10}}}$, which is impractical, to say the least. However, numerical experiments suggest that the set $\mathcal L^+$ is, in fact, quite dense. For example, 1000-th term of $\mathcal L^+$ is equal to $55639$ and we have
\[
\ln\ln(55639)\approx 2.39<\frac{\pi(55639)}{1000}=5.646<\ln(55639)\approx 10.92.
\]

One of reasonable conjectures is that for $x\to +\infty$ we have
\[
|\mathcal L^+\cap [1,x]|=\frac{\pi(x)}{(\ln\ln x)^{1-o(1)}}.
\]
However, it is not even clear whether or not the set $\mathcal L^+$ is infinite.

As for the values of $m(N)$, limited data does seem to indicate that multiplicative functions are very biased towards positivity of partial sums in comparison to ``truly random'' functions, see Table 1.

\vspace{0.2cm}
\begin{center}
\begin{table}[h]
\begin{tabular}{|c|c|c|c|c||c|c|c|c|c|}
\hline
$n$ & $p_n$ & $2^n m(p_n)$ & $m(p_n)$ & $m(p_n)\ln p_n$ & $n$ & $p_n$ & $2^n m(p_n)$ & $m(p_n)$ & $m(p_n)\ln p_n$\\
\hline
2 & 3 & 3& 0.75 & 0.824 &11 & 31 & 1020& 0.498& 1.71\\
\hline
3 & 5 & 6& 0.75& 1.207 & 12 & 37 & 1990& 0.486& 1.755\\
\hline
4 & 7 & 10& 0.625& 1.216& 13 & 41 & 3898& 0.476& 1.767\\
\hline
5 & 11  &19 & 0.594& 1.424& 14 & 43 & 7686& 0.469& 1.764\\
\hline
6 & 13  &37 & 0.578& 1.483& 15 & 47 & 14894& 0.456& 1.756\\
\hline
7 & 17  &70 & 0.547& 1.549& 16 & 53 & 29700& 0.453& 1.799\\
\hline
8 & 19  &137 & 0.535& 1.575& 17 & 59 &  57591& 0.439& 1.79\\
\hline
9 & 23&  264 & 0.516& 1.618&18 & 61&  114098& 0.435& 1.788\\
\hline
10 & 29  &521& 0.509& 1.714& 19 & 67 &  225575& 0.430& 1.808\\
\hline
\end{tabular}
\caption{Values of $m(N)$ for the first few prime numbers.}
\end{table}
\end{center}

Randomized experiments for larger values of $n$ seem to indicate that the quantity $m(p_n)\ln p_n$ continues to grow, so it is possible that $m(N)$ goes to zero even slower than $1/\ln N$. These observations show that our results are close to the true order of magnitude of the function in question. Such improvement over previous results was made possible by the treatment of $\zeta_f(s)$ as a function of complex variable, instead of focusing on $\zeta_f(\sigma)$ for real values of $\sigma$ only.

\end{document}